\newtheorem{theorem}{Theorem}[section]
\newtheorem{lemma}[theorem]{Lemma}
\newtheorem{corollary}[theorem]{Corollary}
\theoremstyle{definition}
\theoremstyle{remark}
\numberwithin{equation}{section}
\begin{document}
\setcounter{page}{1}

\title[Multiplication operators on Orlicz and weighted Orlicz spaces]{Multiplication operators on Orlicz and weighted Orlicz spaces}

\author[R. K. Giri, S. Pradhan]{Ratan Kumar Giri, Shesadev Pradhan}

\address{ Dept. of Mathematics, National Institute of Technology, Rourkela, Odisha, India.}
\email{\textcolor[rgb]{0.00,0.00,0.84}{giri90ratan@gmail.com,
pradhansh@nitrkl.ac.in}}



\subjclass[2010]{Primary 47B38; Secondary 46E30.}

\keywords{Orlicz function, Orlicz space, Multiplication Operator.}
\date{\today}

\begin{abstract}
Let $(\Omega,\Sigma,\mu)$ be a $\sigma$-finite complete measure
space,  $\tau:\Omega\rightarrow\Omega$ be a measurable
transformation and $\phi$ be an Orlicz function. In this article,
first a necessary and sufficient condition for the bounded
multiplication operator $M_u$ on Orlicz space $L^\phi (\Omega)$
induced by measurable function $u$ to be completely continuous has
been established. Next by using Radon-Nikodym derivative $\omega =
\frac{d \mu\circ \tau^{-1}}{d \mu}$, the multiplication operator
$M_u$ on weighted Orlicz space $L^{\phi}_\omega(\Omega)$ have been
characterized.
\end{abstract} \maketitle

\section{Introduction and preliminaries}
Let $(\Omega,\Sigma,\mu)$ be a $\sigma$-finite complete measure
space, where $\Sigma$ is a $\sigma$-algebra of subsets of an
arbitrary set $\Omega$ and $\mu$ is a non-negative measure on
$\Sigma$. Let $\phi :[0,\infty)\rightarrow[0,\infty)$ be a
continuous convex function such that
 $\phi(x)=0$ if and only if $x=0$ with
 $\displaystyle {\lim_{x\rightarrow 0}\phi(x)/x=0}$ and
$\displaystyle {\lim_{x\rightarrow \infty}\phi(x)/x=\infty}.$
Such a function $\phi$ is known as an Orlicz function. Let
$L^{0}(\Omega)$ be denote the linear space of all equivalence
classes of $\Sigma$-measurable functions on $\Omega$, where we
identify any two functions are equal in the sense of $\mu$-almost
everywhere on $\Omega$. Then the functional $I_\phi :L^{0}(\Omega)
\rightarrow [0,\infty]$, defined by
$$I_\phi(f)=\int_\Omega\Phi(f(t))d\mu(t)$$ where $f\in
L^{0}(\Omega)$, is a pseudomodular \cite{Ames97}, which is also
defined as a modular in \cite{Ames98}. Let $L^\phi (\Omega)$ be the
set of all measurable function such that $\int_{\Omega}\phi(\alpha
|f|)d\mu < \infty$ for some $\alpha>0$. The space $L^{\phi}(\Omega)$
is called as Orlicz space and it is a Banach space with two norms:
the Luxemberg norm, defined as
$$ ||f||_\phi=\inf \left \{k>0:I_\phi\left(\frac{|f|}{k}\right)\leq 1\right\}$$ and the
Orlicz norm in the Amemiya form is given as
$$||f||^{0}_\phi = \displaystyle {\inf _{k>0}(1+I_\phi(kf))/k}.$$
 Note that the
equality of the Orlicz norm and the Amemiya norm was proved in
\cite{Ames82}. If $\phi(x)= x^p$, $1<p<\infty$, then
$L^{\phi}(\Omega)= L^p$, the well known Banach space of
$p$-integrable function on $\Omega$ with $||f||_\phi=
\left(\frac{1}{p}\right)^{\frac{1}{p}} ||f||_p$. It is well known
that $||f||_\phi \leq ||f||^{0}_\phi\leq 2||f||_\phi$ and
$||f||_\phi \leq 1$ if and only if $I_\phi(f)\leq 1$. Moreover, if
$A\in \Sigma$ and $0<\mu(A)<\infty$, then
$||\chi_A||_\phi=\frac{1}{\phi^{-1}(\frac{1}{\mu(A)})},$ where
$\chi_A$ is the characteristic function on $A$ and
$\phi^{-1}(t)=\inf \{ s>0 : \phi(s)>t\}$ is the right continuous
inverse of $\phi$.
\par An Orlicz function $\phi$ is said to be satisfied the $\Delta_2$-condition for all $x$ if there exists a positive constant $K$ such
that $\phi(2x)\leq K \phi(x)$ for all $x
> 0$. There is also an equivalent definition of the
above $\Delta_2$ condition i.e., the function $\phi$ is said to
satisfy $\Delta_2$-condition for all $x$ if and only if there is a
$K>0$ such that
$$\phi(Mx)\leq K\phi(x)$$ for any $M>1$ and for all $x>0$. With each Orlicz function $\phi$ we can associate another convex
function $\psi : [0,\infty)\rightarrow \mathbb{R}_+$ defined by
$\psi(y)= \sup \{ x|y|-\phi(x); x\geq 0\}$, $y\in [0,\infty)$ which
have the similar properties as $\phi$. The function $\psi$ is called
complementary function to $\phi$. If $\phi(x)= \frac{x^p}{p}$,
$1<p<\infty$, then the corresponding complementary function to
$\phi$ is $\psi(y)=\frac{y^q}{q}$ where $\frac{1}{p}+\frac{1}{q}=1$.
In general, simple functions are not necessarily dense in
$L^{\phi}(\Omega)$, but if $\phi$ satisfies $\Delta_2$-condition for
all $u>0$, then the simple function are dense in $L^{\phi}(\Omega)$.
Also if $\phi$ satisfies $\Delta_2$-condition for all $u>0$, then
for any $\sigma$-finite measure space the dual of the Orlicz space
$L^{\phi}(\Omega)$ is the Orlicz space $L^{\psi}(\Omega)$ generated
by complementary function $\psi$ to $\phi$. For more literature
concerning Orlicz spaces, we refer to Kufener, John and Fucik
\cite{Ames79}, Musielak \cite{Ames72}, Krasnoselskii and Rutickii
\cite{Ames80} and Rao \cite{Ames94}.
\par Let us recall that an
atom of a measurable space with respect to the measure $\mu$ is an
element $A \in \Sigma$ with $\mu(A)
> 0$ such that for each $F \in \Sigma$, if $F \subset A$ then either
$\mu(F ) = 0$ or $\mu(F ) = \mu(A)$.  The measure $\mu$ is said to
be purely atomic if every measurable set of positive measure
contains an atom and $\mu$ is nonatomic if there are no atoms for
$\mu$. In particular, a measure is nonatomic if for any $A\in\Sigma$
with $\mu(A)>0$, then there exists a measurable subset $B$ of $A$
such that $\mu(A)>\mu(B)>0$. If $A$ is nonatomic with $\mu(A)>0$
then one can construct a decreasing sequence of measurable sets
$A=A_1\supset A_2\supset A_3\supset\cdots $ such that
$$\mu(A)=\mu(A_1)>\mu(A_2)>\cdots >0\,\,.$$ It is known that every $\sigma$- finite measure space $(\Omega, \Sigma, \mu)$
can be decomposed into two disjoint sets $\Omega_1$ and $\Omega_2$,
such that $\mu$ is non-atomic over $\Omega_1$ and $\Omega_2$ is a
countable collection of disjoint atoms respectively (\cite{Ames78,Ames65}).
\par Let $u: \Omega\rightarrow \mathbb{C}$ be a measurable function. Then the rule taking $u$ to $u.f$, is a linear transformation on
$L^0(\Omega)$ and we denote the transformation by $M_u$. In the case
that $M_u$ is continuous, it is called multiplication operator
induced by $u$ \cite{mult2}.
\par The Multiplication operators received
considerable attention over the past several decades especially on
some measurable function spaces such as $L^P$- spaces, Begrman
spaces and a few cones of  Orlicz spaces, such that these operators
played an important role in the study of operators on Hilbert
Spaces. The basic properties of multiplication operators on
measurable function spaces are studied by more mathematicians. For
more of this operatros we refer to Takagi \& Yokouchi \cite{mult3},
Axler \cite{mult4} and Gupta \& Komal \cite{mult1}. The invertible,
compact and Fredholm multiplication operators on Orlicz spaces are
charectrized in the paper of Gupta \& Komal \cite{mult1}. Regarding
the compactness of the multiplication operators $M_u$ on Orlicz
spaces $L^\phi (\Omega)$, we have the following results
\cite{mult1}:
\begin{theorem}
 The operator $M_u$ is a compact operator if and only if $L^\phi(N(u,\epsilon)) $ is finite dimensional for each $\epsilon>0$, where
 $ N(u,\epsilon) = \{ x\in \Omega: |u(x)|\geq \epsilon\}$ and $ L^\phi(N(u,\epsilon))= \{f \in L^\phi (\Omega): f(x)=0 \,\, \forall x \notin N(u,\epsilon)\}$.
\end{theorem}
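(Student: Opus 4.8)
The plan is to derive both implications from two elementary properties of the Luxemburg norm: it is \emph{monotone}, in the sense that $|g|\le|h|$ $\mu$-a.e.\ forces $\|g\|_\phi\le\|h\|_\phi$ (apply the monotonicity of $\phi$ inside the modular $I_\phi$ and take infima over $k$), and it is \emph{positively homogeneous}. Since $M_u$ is a multiplication operator it is in particular bounded on $L^\phi(\Omega)$, and we may fix $\epsilon>0$ throughout each half of the argument.

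Suppose first that $M_u$ is compact. The set $L^\phi(N(u,\epsilon))$ is a closed subspace of $L^\phi(\Omega)$, hence a Banach space, and $M_u$ maps it into itself. On this subspace $M_u$ is bounded below: if $f$ vanishes off $N(u,\epsilon)$ then $|uf|\ge\epsilon|f|$ $\mu$-a.e., so monotonicity and homogeneity give $\|M_uf\|_\phi\ge\epsilon\|f\|_\phi$. Given any bounded sequence $(f_k)$ in $L^\phi(N(u,\epsilon))$, compactness of $M_u$ yields a subsequence with $(M_uf_{k_j})$ convergent, and the estimate $\|f_{k_i}-f_{k_j}\|_\phi\le\epsilon^{-1}\|M_uf_{k_i}-M_uf_{k_j}\|_\phi$ then shows $(f_{k_j})$ is Cauchy, hence convergent. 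Thus the closed unit ball of $L^\phi(N(u,\epsilon))$ is compact, and by the Riesz theorem on finite-dimensional normed spaces $L^\phi(N(u,\epsilon))$ is finite dimensional.

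Conversely, assume $L^\phi(N(u,\epsilon))$ is finite dimensional for every $\epsilon>0$. Set $u_n=u\,\chi_{N(u,1/n)}$, so that $M_{u_n}=M_{\chi_{N(u,1/n)}}M_u$ is bounded with range contained in $L^\phi(N(u,1/n))$; being a bounded operator with finite-dimensional range it is compact. Since $u-u_n=u\,\chi_{\{|u|<1/n\}}$ satisfies $|(u-u_n)f|\le\frac1n|f|$ $\mu$-a.e.\ for every $f$, monotonicity and homogeneity give $\|(M_u-M_{u_n})f\|_\phi\le\frac1n\|f\|_\phi$, i.e.\ $\|M_u-M_{u_n}\|\le\frac1n$. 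Therefore $M_u$ is the operator-norm limit of the finite-rank operators $M_{u_n}$, and is consequently compact.

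The one place that needs care is the forward direction, where the whole argument hinges on transferring the pointwise lower bound $|uf|\ge\epsilon|f|$ to the norm inequality $\|M_uf\|_\phi\ge\epsilon\|f\|_\phi$ and on noting that the restriction of a compact operator to the closed subspace $L^\phi(N(u,\epsilon))$ remains compact; both are immediate once monotonicity of $\|\cdot\|_\phi$ is available. It is worth remarking that no $\Delta_2$-condition on $\phi$ is used anywhere in the proof.
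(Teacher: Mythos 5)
Your proof is correct. One caveat before comparing: the paper states this theorem as a quoted result from Komal and Gupta and does not reprove it; the closest in-paper argument is the proof of the weighted analogue (compactness of $M_u$ on $L^\phi_\omega(\Omega)$ versus finite dimensionality of $L^\phi_\omega(N(u,\epsilon))$), so that is the natural benchmark. Your converse direction coincides with the paper's: truncate $u$ to $u_n=u\chi_{N(u,1/n)}$, note that $M_{u_n}$ has range in the finite-dimensional space $L^\phi(N(u,1/n))$ and is therefore compact, and use the pointwise bound $|u-u_n|\le 1/n$ to get $\|M_u-M_{u_n}\|\le 1/n$, exhibiting $M_u$ as a norm limit of finite-rank operators. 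The forward direction is where you genuinely diverge. The paper argues that the restriction of $M_u$ to the invariant subspace is compact and \emph{invertible} (since $|u|\ge\epsilon$ there), hence open by the open mapping theorem, and then invokes the fact that an infinite-dimensional normed space has no open set with compact closure. You instead use the lower bound $\|M_uf\|_\phi\ge\epsilon\|f\|_\phi$ --- obtained from monotonicity and homogeneity of the Luxemburg norm, both of which you justify correctly --- to transfer convergence of $(M_uf_{k_j})$ back to $(f_{k_j})$, conclude that the closed unit ball of the closed subspace $L^\phi(N(u,\epsilon))$ is compact, and finish with Riesz's theorem. Both arguments exploit the same fact (that $M_u$ is bounded below on $L^\phi(N(u,\epsilon))$), but yours is more self-contained: it bypasses the open mapping theorem and the invertibility corollary, which in the paper's weighted setting carries extra hypotheses such as $\mu(\Omega)<\infty$ and the $\Delta_2$-condition, and your closing remark that no $\Delta_2$-condition is used anywhere is accurate and worth having on record.
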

\begin{corollary}
 If $(\Omega,\Sigma,\mu)$ is a non-atomic measure space, then the only compact multiplication operator on $L^\phi (\Omega)$ are the zero operator.
\end{corollary}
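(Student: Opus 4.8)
The plan is to argue by contraposition using the compactness criterion stated in Theorem~1.1 above. Suppose $M_u$ is a compact multiplication operator on $L^\phi(\Omega)$ but $u$ is not the zero element of $L^0(\Omega)$, i.e. $\mu(\{x\in\Omega:|u(x)|>0\})>0$. Writing $\{|u|>0\}=\bigcup_{n\ge 1}N(u,1/n)$ and using continuity of the measure along this increasing union, there is some $\epsilon>0$ with $\mu(N(u,\epsilon))>0$. The goal is then to show that $L^\phi(N(u,\epsilon))$ is infinite dimensional, which contradicts Theorem~1.1; hence $u=0$ $\mu$-a.e. and $M_u=0$.

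To produce an infinite-dimensional subspace, first I would use $\sigma$-finiteness to pass from $N(u,\epsilon)$ to a measurable subset $A\subseteq N(u,\epsilon)$ with $0<\mu(A)<\infty$ (intersect with an exhausting sequence of sets of finite measure). Since $\mu$ is non-atomic, $A$ contains no atom, so by repeatedly invoking the definition of non-atomicity one extracts a sequence of pairwise disjoint measurable sets $E_1,E_2,\dots\subseteq A$ with $\mu(E_n)>0$ for every $n$: split $A$ into $B_1$ and $A\setminus B_1$ with both of positive measure, put $E_1=B_1$, then repeat the construction inside $A\setminus B_1$, and so on. (Equivalently, one may use the strictly decreasing sequence $A=A_1\supset A_2\supset\cdots$ with $\mu(A_1)>\mu(A_2)>\cdots>0$ recalled in the introduction and set $E_n=A_n\setminus A_{n+1}$.) Each $\chi_{E_n}$ lies in $L^\phi(\Omega)$ since $\mu(E_n)\le\mu(A)<\infty$ and, as recalled in the preliminaries, $\|\chi_{E_n}\|_\phi=1/\phi^{-1}(1/\mu(E_n))<\infty$; moreover $\chi_{E_n}$ vanishes off $A\subseteq N(u,\epsilon)$, so $\chi_{E_n}\in L^\phi(N(u,\epsilon))$.

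Finally, because the $E_n$ are pairwise disjoint with $\mu(E_n)>0$, the family $\{\chi_{E_n}\}_{n\ge 1}$ is linearly independent in $L^\phi(N(u,\epsilon))$: any nontrivial finite linear combination $\sum_{n} c_n\chi_{E_n}$ restricts to the constant $c_k\ne 0$ on $E_k$, a set of positive measure, and so is not the zero class. Hence $L^\phi(N(u,\epsilon))$ contains an infinite linearly independent set and is infinite dimensional, contradicting Theorem~1.1, and the corollary follows. I expect the only delicate point to be the measure-theoretic extraction of the disjoint sequence $\{E_n\}$ of positive measure from the non-atomic set $A$ of finite positive measure; once that is in hand, the argument is a direct application of the stated compactness criterion together with the basic norm formula for characteristic functions.
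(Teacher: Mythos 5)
Your argument is correct: reducing to some $N(u,\epsilon)$ of positive measure, extracting (via non-atomicity and $\sigma$-finiteness) infinitely many pairwise disjoint subsets of finite positive measure, and noting that their characteristic functions form an infinite linearly independent family in $L^\phi(N(u,\epsilon))$ is exactly the standard way to contradict the finite-dimensionality criterion of Theorem~1.1. The paper itself gives no proof of this corollary --- it is quoted from Gupta and Komal --- so there is nothing to compare against, but your deduction is the intended one and all the measure-theoretic steps (splitting a non-atomic set of finite positive measure into a disjoint sequence of positive-measure pieces, and the norm formula $\|\chi_{E_n}\|_\phi = 1/\phi^{-1}(1/\mu(E_n))$) are sound.
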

\begin{corollary}
 If for each $\epsilon >0$, the set $N(u,\epsilon)$ contains only finitely many atoms, then $M_u$ is a compact operator on $L^\phi (\Omega)$.
\end{corollary}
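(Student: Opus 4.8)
The plan is to deduce this from the compactness criterion of Theorem~1.1, according to which $M_u$ is compact precisely when $L^\phi(N(u,\epsilon))$ is finite dimensional for every $\epsilon>0$. So fix $\epsilon>0$; by hypothesis $N(u,\epsilon)$ contains only finitely many atoms, say the (pairwise essentially disjoint) atoms $A_1,\dots,A_k$. Since $(\Omega,\Sigma,\mu)$ is $\sigma$-finite, every atom has finite measure, so $\chi_{A_1},\dots,\chi_{A_k}\in L^\phi(\Omega)$; being supported on disjoint sets of positive measure, they are linearly independent. The goal is to show that these $k$ functions in fact span $L^\phi(N(u,\epsilon))$.

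The analytic core is the elementary fact that every $\Sigma$-measurable function is $\mu$-a.e.\ constant on an atom. Indeed, if $A$ is an atom and $f$ is real-valued and measurable, then for each real $s$ the set $\{t\in A:\, f(t)>s\}$ is, by the definition of an atom, either $\mu$-null or of full measure $\mu(A)$; letting $s_0$ be the supremum of those $s$ for which this set has full measure --- a finite value, because $f$ is finite $\mu$-a.e.\ on the positive-measure set $A$ --- one checks that $f=s_0$ $\mu$-a.e.\ on $A$, and the general (complex-valued) case follows by splitting into real and imaginary parts. Consequently any $f\in L^\phi(\Omega)$ that vanishes off $A_1\cup\cdots\cup A_k$ must be of the form $f=\sum_{i=1}^{k}c_i\chi_{A_i}$, i.e.\ it lies in $\operatorname{span}\{\chi_{A_1},\dots,\chi_{A_k}\}$.

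The remaining point --- and the one I expect to require the most care --- is to verify that $N(u,\epsilon)$ carries no $\mu$-mass outside $A_1\cup\cdots\cup A_k$. Using the decomposition $\Omega=\Omega_1\sqcup\Omega_2$ with $\Omega_1$ nonatomic and $\Omega_2$ a countable union of atoms, the part of $N(u,\epsilon)$ inside $\Omega_2$ coincides with $A_1\cup\cdots\cup A_k$ up to a $\mu$-null set, since any positive-measure remainder would contain a further atom of $N(u,\epsilon)$, contradicting finiteness; and the part $N(u,\epsilon)\cap\Omega_1$ must be $\mu$-null, which is automatic when $\mu$ is purely atomic and is the hypothesis implicitly needed in general (in agreement with Corollary~1.2). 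Granting this, $L^\phi(N(u,\epsilon))=\operatorname{span}\{\chi_{A_1},\dots,\chi_{A_k}\}$ is finite dimensional for each $\epsilon>0$, and Theorem~1.1 gives that $M_u$ is compact on $L^\phi(\Omega)$.
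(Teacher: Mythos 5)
The paper does not actually prove this corollary: it is quoted, together with Theorem~1.1 and Corollary~1.2, from Komal and Gupta's paper on multiplication operators between Orlicz spaces, so there is no in-paper argument to compare yours against. Your proof is the natural one and is correct: you reduce to Theorem~1.1, note that atoms have finite measure in a $\sigma$-finite space so that $\chi_{A_1},\dots,\chi_{A_k}\in L^\phi(\Omega)$, and use the standard fact that a measurable function is $\mu$-a.e.\ constant on an atom (your $s_0$-argument is the usual one and is sound) to conclude that $L^\phi(N(u,\epsilon))=\operatorname{span}\{\chi_{A_1},\dots,\chi_{A_k}\}$ once $N(u,\epsilon)$ coincides, up to a null set, with $A_1\cup\cdots\cup A_k$. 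Your final observation is the most valuable part and is exactly right: the literal hypothesis ``contains only finitely many atoms'' is too weak, since a set with a nonatomic part of positive measure contains \emph{no} atoms there and yet supports an infinite-dimensional $L^\phi(N(u,\epsilon))$; read literally, the corollary would make every bounded $M_u$ compact on a nonatomic space, contradicting Corollary~1.2. The intended reading (and the one under which your argument is complete) is that $N(u,\epsilon)$ \emph{consists of} finitely many atoms, i.e.\ $N(u,\epsilon)\cap\Omega_1$ is $\mu$-null and $N(u,\epsilon)\cap\Omega_2$ meets only finitely many of the atoms of $\Omega_2$ in full measure. With that reading made explicit, I see no gap.
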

In this paper, we establish some classic properties of
multiplication operators on Orlicz and weighted Orlicz spaces. In
section $2$, we show that a completely continuous multiplication
operator $M_u$ on Orlicz spaces,  is compact if and only if for each
$\epsilon >0$, the set $N(u,\epsilon)$ contains only finitely many
atoms. Then in section $3$, we give some necessary and sufficient
conditions for invertibility, boudedness, compactness of
multiplication operators between weighted Orlicz spaces.

\section{Completely continuous Multiplication operators on Orlicz spaces }
Let $X$ and $Y$ be two Banach spaces and $A: X \rightarrow Y$  be a
bounded linear operator. Recall that $A: X \rightarrow Y$ is said to
be completely continuous if $A$ maps every weakly compact set of $X$
into a compact set of $Y$. Equivalently, the operator $A$ is
completely continuous if for every sequence $\{x_n\}$ in $X$, we
have
$$ x_n\rightarrow x\,\,\, \mbox{weakly} \Rightarrow Ax_n \rightarrow Ax
\,\,\,\mbox{in norm.}$$ Now if the Orlicz function $\phi$ is
strictly increasing then its inverse $\phi^{-1}$ is uniquely
defined. We say that $\phi(x) \succ \succ x$ if
$\frac{\phi^{-1}(x)}{x}\rightarrow 0$ when $x\rightarrow \infty $.
The function  $x^p$, where $p>1$ is one example of such function
$\phi$. Now the following theorem characterize the completely
continuous property of multiplication operators on Orlicz spaces.
\begin{theorem}
Suppose that the Orlicz function $\phi$ satisfies $\Delta_2$
condition for all $x \geq 0$ and $\phi(x) \succ \succ x$. Also let
the multiplication operator $M_u: L^\phi (\Omega) \rightarrow L^\phi
(\Omega)$ be bounded. Then the following are equivalent:
\begin{enumerate}
\item The operator $M_u: L^\phi (\Omega)
\rightarrow L^\phi (\Omega)$ is compact.
\item The operator $M_u: L^\phi (\Omega)
\rightarrow L^\phi (\Omega)$ is completely continuous.
\item For any $\epsilon >0$, the set $N(u,\epsilon)= \{x\in \Omega:
|u(x)|\geq \epsilon \}$ contains finitely many atoms.
\end{enumerate}
\end{theorem}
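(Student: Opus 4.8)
The plan is to establish the cycle of implications $(1)\Rightarrow(2)\Rightarrow(3)\Rightarrow(1)$, drawing on the results already quoted from \cite{mult1}.

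First, the implication $(1)\Rightarrow(2)$ is essentially formal: every compact operator between Banach spaces is completely continuous, since a weakly convergent sequence is bounded, hence its image lies in a relatively compact set, and the weak convergence of $M_u x_n$ to $M_u x$ (which follows from continuity of $M_u$) upgrades to norm convergence once the sequence is relatively norm-compact. I would state this in one or two lines. The implication $(3)\Rightarrow(1)$ is precisely Corollary 1.3 of the excerpt, so I would simply invoke it.

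The real content is $(2)\Rightarrow(3)$, which I would prove by contraposition: assume there exists $\epsilon_0>0$ such that $N(u,\epsilon_0)$ contains infinitely many atoms, and construct a sequence in $L^\phi(\Omega)$ that converges weakly to $0$ but whose image under $M_u$ does not converge to $0$ in norm. Let $\{A_n\}$ be a sequence of pairwise disjoint atoms contained in $N(u,\epsilon_0)$. Since each atom has finite positive measure (by $\sigma$-finiteness), set $f_n = \chi_{A_n}/\|\chi_{A_n}\|_\phi$, so $\|f_n\|_\phi = 1$ and $\|\chi_{A_n}\|_\phi = 1/\phi^{-1}(1/\mu(A_n))$ by the formula in the preliminaries. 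The first key step is to show $f_n \to 0$ weakly: because $\phi$ satisfies $\Delta_2$, the dual of $L^\phi(\Omega)$ is $L^\psi(\Omega)$, so it suffices to check that $\int_\Omega f_n g\, d\mu \to 0$ for every $g\in L^\psi(\Omega)$; this reduces to $\int_{A_n} g\, d\mu / \|\chi_{A_n}\|_\phi \to 0$, and the hypothesis $\phi(x)\succ\succ x$ is exactly what forces $\mu(A_n)\phi^{-1}(1/\mu(A_n)) \to \infty$ or (in the bounded-measure regime, where atoms have measure bounded below or the relevant quantity stays controlled) what makes the Hölder estimate $|\int_{A_n} g|\le \|g\chi_{A_n}\|_\psi \cdot \mu(A_n)\phi^{-1}(1/\mu(A_n)) \cdot \text{(const)}$ tend to $0$ — here one uses that $\mu(A_n)\to 0$ forces, via $\phi^{-1}(x)/x\to 0$, that $\mu(A_n)\phi^{-1}(1/\mu(A_n))\to 0$, while absolute continuity of the integral handles atoms of measure bounded away from zero (of which there can only be finitely many in any set of finite measure after splitting). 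The second key step is to bound $\|M_u f_n\|_\phi$ below: on the atom $A_n$ the function $u$ is a.e. equal to a constant $c_n$ with $|c_n|\ge\epsilon_0$, so $M_u f_n = c_n f_n$ and $\|M_u f_n\|_\phi = |c_n| \ge \epsilon_0 > 0$, which contradicts complete continuity since $f_n\to 0$ weakly would force $M_u f_n\to 0$ in norm.

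The main obstacle I anticipate is the weak-convergence step: making the estimate $\int_{A_n} g\, d\mu /\|\chi_{A_n}\|_\phi \to 0$ rigorous uniformly over a weakly relevant class. One must treat separately the atoms whose measure tends to $0$ (where $\phi(x)\succ\succ x$ does the work through $\mu(A_n)\phi^{-1}(1/\mu(A_n))\to 0$) and the possibility of infinitely many atoms of comparable positive measure — but the latter is impossible inside a set of finite measure by $\sigma$-finiteness, or else one passes to a subsequence and uses that $g\in L^\psi$ has absolutely continuous integral so $\int_{A_n}|g|\,d\mu\to 0$ while $\|\chi_{A_n}\|_\phi$ stays bounded below. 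Assembling these cases cleanly, and verifying that the constant from the generalized Hölder inequality for Orlicz spaces does not spoil the limit, is the technical heart of the argument; everything else is bookkeeping with the quoted corollaries.
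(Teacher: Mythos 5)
Your overall architecture coincides with the paper's: $(1)\Rightarrow(2)$ is formal, $(3)\Rightarrow(1)$ is the quoted corollary, and the content is $(2)\Rightarrow(3)$, proved by exhibiting a normalized weakly null sequence of scaled characteristic functions supported in $N(u,\epsilon_0)$ whose images under $M_u$ have norm bounded below by $\epsilon_0$. However, there is a genuine gap in your negation of $(3)$: you only consider the case where $N(u,\epsilon_0)$ contains infinitely many atoms. For the three-way equivalence to hold, condition $(3)$ has to be read as ``$N(u,\epsilon)$ is essentially a finite union of atoms'' --- otherwise $(3)\Rightarrow(1)$ already fails, since a set $N(u,\epsilon)$ with a non-atomic part of positive measure and no atoms would satisfy the literal condition while $M_u$ cannot be compact there (this is exactly Corollary 1.2). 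The paper accordingly splits the failure of $(3)$ into two cases: $E_0=N(u,\epsilon_0)$ contains a non-atomic subset, or $E_0$ contains infinitely many atoms. In the non-atomic case one chooses $E_n\subset E_0$ with $0<\mu(E_n)<1/n$ and sets $h_n=\phi^{-1}(1/\mu(E_n))\chi_{E_n}$; this is where the hypothesis $\phi(x)\succ\succ x$ is doing its main work, via $\phi^{-1}(1/\mu(E_n))\,\mu(E_n)\to 0$. You must add this case; your atomic construction does not cover it.

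On the step you flag as the technical heart, the paper's route is cleaner than the one you sketch: rather than estimating $\int f_n g\,d\mu$ for arbitrary $g\in L^{\psi}$ by a H\"older inequality, it tests only against characteristic functions $\chi_F$ (simple functions being dense in the dual), for which $\int_\Omega h_n\chi_F\,d\mu=\phi^{-1}(1/\mu(E_n))\,\mu(F\cap E_n)$, and then splits: if $\mu(E_n)\to 0$ the factor $\phi^{-1}(1/\mu(E_n))\mu(E_n)\to 0$ by $\phi\succ\succ x$; if not, the disjointness of the atoms forces $\sum_n\mu(E_n)=\infty$, hence $\mu(F\cap E_n)\to 0$ for every $F$ of finite measure while $\phi^{-1}(1/\mu(E_n))$ stays bounded. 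Your sketch of this dichotomy contains a confusion (the assertion that $\mu(A_n)\phi^{-1}(1/\mu(A_n))\to\infty$ is not what $\phi\succ\succ x$ gives, and the remark about ``finitely many atoms of comparable measure in a set of finite measure'' does not apply since $N(u,\epsilon_0)$ may have infinite measure), so you should replace it by the two-case argument above. Your lower bound $\|M_uf_n\|_\phi\ge\epsilon_0$ via the a.e.\ constancy of $u$ on an atom is fine and slightly different from the paper's, which uses only $|u|\ge\epsilon_0$ on the support together with monotonicity of the Luxemburg norm (and therefore also works verbatim in the non-atomic case).
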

\begin{proof}
$(1)\Rightarrow (2)$ : Follows from the definition.\\
$(3)\Rightarrow (1)$ : Follows from the previous corollary. Thus it
is enough to prove $ (2) \Rightarrow (3)$. Suppose that the result
is false that is, $\exists \, \epsilon_0 >0$  such that the set
$N(u, \epsilon_0)= E_0$ ( say ) does not consists finitely many
atoms. Then the set $E_0$ either contains a non-atomic subset or has
infinitely many atoms.\\
If $E_0$ contains a non-atomic subset, then by the definition of
non-atomic set, there exists a decreasing sequence $\{E_n\}$ such
that $E_n \subset E_0$ and $0< \mu(E_n) <\frac{1}{n}.$\\
Then for each $n$, define $$h_n:=
\phi^{-1}\left(\frac{1}{\mu(E_n)}\right)\chi_{E_n}.$$ Then $ h_n\in
L^\phi (\Omega)$ and $||h_n||_\phi = 1$. We shall show that the
sequence $\{h_n\}$ weakly converges to zero as $n\rightarrow
\infty$. Since simple functions are dense in $L^\psi (\Omega)$, it
is enough to show that for each measurable subset $F$ of $E_0$,
$\displaystyle{\int_\Omega h_n \chi_F d\mu \rightarrow 0}$ holds,
when $n\rightarrow \infty$.\\
Now since $\phi(x) \succ \succ x$ we have,
 \begin{eqnarray*}
 \left|\displaystyle{\int_\Omega h_n \chi_F d\mu}\right |  & = & \phi^{-1}\left(\frac{1}{\mu(E_n)}\right) \mu(F\cap
 E_n)\\
& \leq & \phi^{-1}\left(\frac{1}{\mu(E_n)}\right) \mu(E_n)\\
&= &
\frac{\phi^{-1}\left(\frac{1}{\mu(E_n)}\right)}{\frac{1}{\mu(E_n)}}
\rightarrow 0 \,\,\,\,\,\mbox{as\,\, $n\rightarrow \infty$}
 \end{eqnarray*}
Now assume that $E_0$ has infinitely may atoms and $\{E_n\}$ are
disjoints atoms in $E_0$. Now, if $\mu(E_n)\rightarrow 0$ as
$n\rightarrow \infty $, then by the similar arguments $\{h_n\}$
weakly converges to zero. Otherwise, $\mu(E_0)\geq \displaystyle
{\mu\left(\cup_{n=1}^\infty E_n\right)= \sum_{n=1}^\infty \mu(E_n)=
+ \infty} $ implies that $\mu(F\cap E_n) \rightarrow 0$ as
$n\rightarrow \infty$ for each $F \subseteq E_0$ with
$0<\mu(F)<\infty$. Thus we have,$$ \int_\Omega h_n \chi_F d\mu
\rightarrow 0$$ as $n\rightarrow \infty$. Hence $\{h_n\}$ weakly
converges to zero as $n\rightarrow \infty$ in both the cases.\\
 Now as the operator $M_u: L^\phi (\Omega)
\rightarrow L^\phi (\Omega)$ is completely continuous and $\{h_n\}$
weakly converges to zero, hence the sequence $\{M_u h_n\}$ converges
to $0$ in norm that is, $$ ||M_uh_n||_\phi \rightarrow
0\,\,\,\,\,\mbox{as}\,\,\, n\rightarrow \infty .$$
 Now we have,
\begin{eqnarray*}
||M_u h_n||_\phi & = & \displaystyle{\inf \left \{k>0:\int _\Omega
\phi
\left(\frac{|u. h_n|}{k}\right)d\mu \leq 1\right\}}\\
& \geq & \displaystyle{\inf \left \{k>0:\int _\Omega \phi
\left(\frac{|\epsilon_0. h_n|}{k}\right)d\mu \leq 1\right\}}\\
& = & \epsilon _0 \,\, \displaystyle{\inf \left \{\frac{k}{\epsilon
_0}>0:\int _\Omega \phi
\left(\frac{|h_n|}{\frac{k}{\epsilon_0}}\right)d\mu \leq
1\right\}}\\
& = & \epsilon _0 \,\,||h_n||_\phi\\
& = & \epsilon_0
\end{eqnarray*}
This implies that $||M_u h_n||_\phi \geq \epsilon_ 0$ for all $n$,
which contradiction to the fact that $\{M_u h_n\}$ converges to $0$
in norm. Therefore, we arrive at a contradiction and the proof is
complete.
\end{proof}

\section{Multiplication operators on weighted Orlicz spaces}
Let $\tau:\Omega\rightarrow\Omega$ be a measurable transformation,
that is, $\tau^{-1}(A)\in\Sigma$ for any $A\in\Sigma$. If
$\mu(\tau^{-1}(A))=0$ for any $A\in\Sigma$ with $\mu(A)=0$, then
$\tau$ is called as nonsingular. This condition implies that the
measure $\mu\circ\tau^{-1}$, defined by $\mu\circ\tau^{-1}(A)
:=\mu(\tau^{-1}(A))$ for $A\in\Sigma$, is
 absolutely continuous w.r.t $\mu$ ($\mu\circ\tau^{-1}\ll\mu$). Then
the Radon-Nikodym theorem implies that there exist a non-negative
locally integrable function $\omega$ on $\Omega$ such that
$$\mu\circ\tau^{-1}(A)=\int_{A}\omega(t)d\mu(t)\,\,\,\quad\mbox{for}
\,\, A\in\Sigma.$$ The weighted orlicz space is defined as
$$L^{\phi}_\omega(\Omega)=\left\{f \in L^{0}(\Omega):
\int_{\Omega}\phi(\alpha|f|)\omega(t)d\mu(t)<\infty,\,\,
\mbox{for\,\,some}\,\, \alpha >0\right\} $$ with the  norm $$
||f||_{\phi,\omega}=\inf \left\{k>0 : \int_\Omega
\phi\left(\frac{|f|}{k}\right)\omega(t) d\mu(t)\leq 1\right\},$$
where the weight $\omega$ is the Radon-Nikodym derivative of
$\mu\circ \tau^{-1}$ w.r.t $\mu$ ( page no 11, \cite{blum74}).
Define the set $$
 \tilde{L}^\phi_\omega(\Omega)= \left\{ f :\Omega\rightarrow \mathbb{R}
\,\,\,\mbox{ measurable}\,\, | \int_\Omega
\phi(|f(t)|)\omega(t)d\mu(t)<\infty \right\}.$$ Note that if the
Orlicz function $\phi\in\Delta_2$ , then
$\tilde{L}^\phi_\omega(\Omega)$ is a linear space and for each $f\in
L^\phi_\omega(\Omega)$, there is a $\alpha>0$ such that $$\alpha
f\in B_{\phi,\omega}= \left\{g\in\tilde{L}^\phi_\omega(\Omega):
\int_\Omega\phi(|g|)\omega d\mu\leq 1\right\}.$$ Also if
$\phi\in\Delta_2$, then $\tilde{L}^\phi_\omega(\Omega) =
L^\phi_\omega(\Omega)$ and also $||f||_{\phi,\omega}\leq 1$ if and
only if $I_{\phi,\omega}(f)\leq 1$. Proof of above two statement
follows almost similar lines in the case of  $L^\phi(\Omega)$ spaces
(\cite{Ames94}).
 Now we start with the following results.
\begin{theorem}
Let $L^\phi_\omega(\Omega)$ be weighted Orlicz space and $\phi$
satisfies $\Delta_2$ condition,  then for nonzero $f\in
L^\phi_\omega(\Omega)$, we have
$$\int_\Omega \phi\left(\frac{|f|}{||f||_{\phi , \omega}}\right)\omega(t)
d\mu(t)= 1.$$
\end{theorem}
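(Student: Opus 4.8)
The plan is to write $k_{0}=\|f\|_{\phi,\omega}$ and, with the notation $I_{\phi,\omega}(g)=\int_{\Omega}\phi(|g|)\,\omega\,d\mu$, to establish the two inequalities $I_{\phi,\omega}(f/k_{0})\le 1$ and $I_{\phi,\omega}(f/k_{0})\ge 1$ separately. First I would record that, since $f$ is a nonzero element of $L^{\phi}_{\omega}(\Omega)$, the number $k_{0}$ is finite (membership in the space) and strictly positive: if $k_{0}=0$ then along a sequence $k\downarrow 0$ one would have $I_{\phi,\omega}(f/k)\le 1$, while $\phi(|f|/k)\to\infty$ on the set where $f$ does not vanish (a set of positive $\omega\,d\mu$-measure), so Fatou's lemma forces $I_{\phi,\omega}(f/k)\to\infty$, a contradiction.

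For the upper bound I would choose, using the definition of the infimum, a sequence $k_{n}\downarrow k_{0}$ with $I_{\phi,\omega}(f/k_{n})\le 1$. Because $\phi$ is continuous and nondecreasing, the functions $\phi(|f|/k_{n})$ increase pointwise to $\phi(|f|/k_{0})$, so the monotone convergence theorem yields $I_{\phi,\omega}(f/k_{0})=\lim_{n}I_{\phi,\omega}(f/k_{n})\le 1$. For the lower bound I would argue by contradiction: suppose $I_{\phi,\omega}(f/k_{0})<1$. The point at which the hypothesis $\phi\in\Delta_{2}$ is used is the assertion (already noted in the excerpt, namely $\tilde{L}^{\phi}_{\omega}(\Omega)=L^{\phi}_{\omega}(\Omega)$) that $I_{\phi,\omega}(\beta f)<\infty$ for \emph{every} $\beta>0$; concretely, from $I_{\phi,\omega}(\alpha f)<\infty$ for some $\alpha>0$ the estimate $\phi(Mx)\le K_{M}\phi(x)$ promotes finiteness at scale $\alpha$ to finiteness at every larger scale, and monotonicity of $\phi$ handles smaller scales. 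Fixing any $k_{1}$ with $0<k_{1}<k_{0}$, one then has $\phi(|f|/k)\,\omega\le \phi(|f|/k_{1})\,\omega\in L^{1}(\mu)$ for all $k\in[k_{1},k_{0}]$, so dominated convergence shows that $k\mapsto I_{\phi,\omega}(f/k)$ is continuous on $[k_{1},k_{0}]$. Since its value at $k_{0}$ is strictly less than $1$, there is some $k<k_{0}$ with $I_{\phi,\omega}(f/k)\le 1$, contradicting $k_{0}=\inf\{k>0:I_{\phi,\omega}(f/k)\le 1\}$. Hence $I_{\phi,\omega}(f/k_{0})\ge 1$, and together with the upper bound this gives $I_{\phi,\omega}(f/k_{0})=1$.

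The main obstacle is the lower bound: one needs an $L^{1}(\mu)$ dominating function for $\phi(|f|/k)\,\omega$ as $k$ ranges over a left neighbourhood of $k_{0}$, and this is exactly what can fail for a general Orlicz function (the modular may jump at $k_{0}$). The $\Delta_{2}$ condition, equivalently the identification $\tilde{L}^{\phi}_{\omega}(\Omega)=L^{\phi}_{\omega}(\Omega)$, is precisely what rescues this step; once the dominating function is available, the rest is a routine application of the monotone and dominated convergence theorems together with the definition of the Luxemburg norm.
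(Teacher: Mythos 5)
Your proof is correct and follows essentially the same route as the paper's: both hinge on the continuity of $k\mapsto\int_\Omega\phi(k|f|)\,\omega\,d\mu$, obtained from the dominated convergence theorem with the $\Delta_2$ condition supplying the integrable majorant $\phi(\alpha|f|)\,\omega$. The paper packages this as an intermediate value theorem argument for the modular $\rho_f$, whereas you split the conclusion into the two inequalities $I_{\phi,\omega}(f/\|f\|_{\phi,\omega})\le 1$ (monotone convergence) and $\ge 1$ (continuity plus the infimum definition); your version is in fact slightly more careful at the point where the paper simply asserts that the solution $k'$ of $\rho_f(k')=1$ equals $\|f\|_{\phi,\omega}^{-1}$.
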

\begin{proof}
Consider the map $\rho_f:\mathbb{R_+}\cup\{0\}\longrightarrow
\mathbb{R}$ defined by $$ \rho_f( k)= \int_\Omega \phi(k|f|)\omega
d\mu,\quad k \in \mathbb{R_+}\cup\{0\}.$$ Then $\rho_f$ is
nondecreasing with $\rho_f(0)=0$ and $\displaystyle {\lim_{k
\rightarrow \infty} \rho_f(k)=\infty}$. Now if $k_n\longrightarrow
k_0$, as $\phi$ is continuous hence, $ \phi(k_n |f|)\omega
\longrightarrow \phi (k_0|f|)\omega$. As $k_n\longrightarrow k_0$,
there exists $\alpha>0$ such that $|k_n|\leq\alpha$ for all $n$,
hence $\phi(k_n|f|)\omega\leq \phi(\alpha|f|)\omega$. Since $\phi$
satisfies $\Delta_2$ condition, hence $\tilde{L}^\phi_\omega(\Omega)
= L^\phi_\omega(\Omega)$. Thus we have,
$$ \alpha f\in \tilde{L}^\phi_\omega(\Omega) \Rightarrow
\int_\Omega\phi(\alpha |f|)\omega d\mu<\infty.$$ So, by dominated
convergence theorem, we get $$\int_\Omega\phi(k_n|f|)\omega d\mu
\longrightarrow \int_\Omega\phi(k_0|f|)\omega d\mu.$$ Hence $\rho_f$
is continuous, monotone increasing  function on $[0,\infty)$. By
intermediate value theorem, there exists  $k'$ such that
\begin{align*}
 & \rho_f(k')=1\\
 \Rightarrow & \int_\Omega \phi(k'|f|)\omega d\mu =1
\end{align*}
From the definition of $||f||_{\phi,\omega}$, we get
$k'=||f||_{\phi,\omega}^{-1}$. Hence the result follows.
\end{proof}
\begin{corollary}
For any $F\in\Sigma$ with $0<\mu(F)<\infty $, we have
$||\chi_F||_{\phi,\omega}=\frac{1}{\phi^{-1}\left(\frac{1}{\mu(\tau^{-1}(F))}\right)}$.
\end{corollary}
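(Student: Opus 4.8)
The plan is to read this off directly from the preceding theorem (the identity $\int_\Omega \phi(|f|/||f||_{\phi,\omega})\omega\,d\mu=1$ for nonzero $f\in L^\phi_\omega(\Omega)$) applied to $f=\chi_F$. First I would check that $\chi_F$ is an admissible, nonzero element of $L^\phi_\omega(\Omega)$: since $\omega$ is locally integrable and $\mu(F)<\infty$, the quantity $\mu(\tau^{-1}(F))=\int_F\omega\,d\mu$ is finite; assuming it is also positive (the degenerate case $\mu(\tau^{-1}(F))=0$ is consistent with the asserted formula under the convention $\phi^{-1}(\infty)=\infty$), we get $\int_\Omega\phi(\alpha\chi_F)\omega\,d\mu=\phi(\alpha)\mu(\tau^{-1}(F))<\infty$ for every $\alpha>0$, so $\chi_F\in L^\phi_\omega(\Omega)$ with $||\chi_F||_{\phi,\omega}>0$.

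The computational core is the elementary identity, valid for every $k>0$,
\[
\int_\Omega \phi\!\left(\frac{\chi_F(t)}{k}\right)\omega(t)\,d\mu(t)=\phi\!\left(\frac1k\right)\int_F\omega(t)\,d\mu(t)=\phi\!\left(\frac1k\right)\mu(\tau^{-1}(F)),
\]
which uses only that $\chi_F$ equals $1$ on $F$ and $0$ elsewhere, that $\phi(0)=0$, and the defining property $\mu\circ\tau^{-1}(F)=\int_F\omega\,d\mu$ of the Radon--Nikodym weight $\omega$.

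Now I would substitute $k=||\chi_F||_{\phi,\omega}$ into this identity and combine it with the preceding theorem (applicable since $\phi\in\Delta_2$), which gives $\int_\Omega\phi\big(\chi_F/||\chi_F||_{\phi,\omega}\big)\omega\,d\mu=1$. Together they force $\phi\big(1/||\chi_F||_{\phi,\omega}\big)\,\mu(\tau^{-1}(F))=1$, i.e. $\phi\big(1/||\chi_F||_{\phi,\omega}\big)=1/\mu(\tau^{-1}(F))$, and applying $\phi^{-1}$ yields $||\chi_F||_{\phi,\omega}=1/\phi^{-1}\big(1/\mu(\tau^{-1}(F))\big)$. The only step warranting a word of care is this last inversion: an Orlicz function as defined here is continuous and, being convex with $\phi(0)=0$ and $\phi>0$ on $(0,\infty)$, is automatically strictly increasing on $(0,\infty)$, hence a bijection of $[0,\infty)$ onto $[0,\infty)$ whose inverse is exactly the right-continuous $\phi^{-1}$ above. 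Alternatively one can bypass the theorem entirely and evaluate $||\chi_F||_{\phi,\omega}=\inf\{k>0:\phi(1/k)\,\mu(\tau^{-1}(F))\le 1\}$ straight from the displayed identity. I do not expect any genuine obstacle; this bookkeeping about $\phi^{-1}$ is the sole subtlety.
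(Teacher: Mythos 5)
Your proposal is correct and follows essentially the same route as the paper: apply the preceding theorem to $f=\chi_F$, evaluate the modular of a characteristic function via $\omega\,d\mu = d(\mu\circ\tau^{-1})$ to get $\phi\bigl(1/||\chi_F||_{\phi,\omega}\bigr)\mu(\tau^{-1}(F))=1$, and invert $\phi$. Your extra checks (that $\chi_F$ is a nonzero admissible element, the degenerate case $\mu(\tau^{-1}(F))=0$, and the strict monotonicity of $\phi$ justifying the inversion) are sound refinements the paper leaves implicit, but they do not change the argument.
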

\begin{proof}
From the previous theorem, we have
\begin{align*}
& \int_\Omega
\phi\left(\frac{\chi_F}{||\chi_F||_{\phi,\omega}}\right)\omega d\mu
=1 \\
\Rightarrow &\, \int_\Omega
\phi\left(\frac{\chi_F}{||\chi_F||_{\phi,\omega}}\right)
d\mu\circ\tau^{-1} =1 \\
\Rightarrow &\, \phi\left(\frac{1}{||\chi_F||_{\phi,\omega}}\right)
\mu\circ\tau^{-1}(F)=1 \\
\Rightarrow &\, \phi\left(\frac{1}{||\chi_F||_{\phi,\omega}}\right)
=
\frac{1}{\mu\circ\tau^{-1}(F)}\\
\end{align*}
Hence, it follows that $||\chi_F||_{\phi,\omega} =
\frac{1}{\phi^{-1}\left(\frac{1}{\mu(\tau^{-1}(F))}\right)}$
\end{proof}
\begin{lemma}
Let $\mu\left(\tau^{-1}(E)\right) \geq \mu(E)$ for every $E\in
\Sigma$. Then convergence in norm implies convergence in measure
$\mu$ in the weighted Orlicz space $L^{\phi}_\omega(\Omega)$.
\end{lemma}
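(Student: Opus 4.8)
The plan is to verify the definition directly: assuming $f_n\to f$ in $||\cdot||_{\phi,\omega}$, set $g_n:=f_n-f$ (so $||g_n||_{\phi,\omega}\to 0$) and show that for each fixed $\varepsilon>0$ the level sets $A_n:=\{x\in\Omega:|g_n(x)|\ge\varepsilon\}$ satisfy $\mu(A_n)\to 0$. Indices with $g_n=0$ contribute $\mu(A_n)=0$, so we may assume $g_n\neq 0$ and $0<||g_n||_{\phi,\omega}<\infty$ for the indices that matter.

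First I would normalise the modular using Theorem 3.1 (legitimate since $\phi\in\Delta_2$): $\int_\Omega\phi(|g_n|/||g_n||_{\phi,\omega})\,\omega\,d\mu=1$. Restricting this integral to $A_n$ and using that $\phi$ is nondecreasing while $|g_n|\ge\varepsilon$ there yields
\[
1\ \ge\ \int_{A_n}\phi\!\left(\frac{|g_n|}{||g_n||_{\phi,\omega}}\right)\omega\,d\mu\ \ge\ \phi\!\left(\frac{\varepsilon}{||g_n||_{\phi,\omega}}\right)\int_{A_n}\omega\,d\mu .
\]
(If one wishes to sidestep the $\Delta_2$ assumption behind Theorem 3.1, the same bound comes straight from the definition of the Luxemburg norm: $\int_\Omega\phi(|g_n|/k)\,\omega\,d\mu\le 1$ for every $k>||g_n||_{\phi,\omega}$, then let $k$ decrease to $||g_n||_{\phi,\omega}$.) Next, the defining relation $\mu\circ\tau^{-1}(E)=\int_E\omega\,d\mu$ gives $\int_{A_n}\omega\,d\mu=\mu(\tau^{-1}(A_n))$, and the hypothesis $\mu(\tau^{-1}(E))\ge\mu(E)$ applied to $E=A_n$ then forces
\[
\mu(A_n)\ \le\ \mu\!\left(\tau^{-1}(A_n)\right)\ =\ \int_{A_n}\omega\,d\mu\ \le\ \frac{1}{\phi\!\left(\varepsilon/||g_n||_{\phi,\omega}\right)} .
\]

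Finally, since $||g_n||_{\phi,\omega}\to 0$ we have $\varepsilon/||g_n||_{\phi,\omega}\to\infty$, and because $\lim_{x\to\infty}\phi(x)/x=\infty$ (in particular $\phi(x)\to\infty$), the right-hand side above tends to $0$. Hence $\mu(A_n)\to 0$ for every $\varepsilon>0$, i.e.\ $f_n\to f$ in measure $\mu$. I do not expect a genuine obstacle here: the argument is just monotonicity of $\phi$, the growth condition $\phi(x)\to\infty$, and the change-of-variables identity for $\omega$; the only place deserving a line of care is the passage from the norm to the modular estimate (Theorem 3.1, or its one-sided substitute above) together with the harmless separate treatment of the indices where $g_n=0$.
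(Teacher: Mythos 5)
Your proof is correct. The skeleton is the same as the paper's (a Chebyshev-type estimate on the level set $A_n$, the identity $\int_{A_n}\omega\,d\mu=\mu(\tau^{-1}(A_n))$, and the hypothesis $\mu(\tau^{-1}(A_n))\ge\mu(A_n)$), but the step that converts norm-smallness into measure-smallness is genuinely different. The paper first passes from the norm to the unnormalized modular via the convexity inequality $\phi(|g|)\le k\,\phi(|g|/k)$ for $k=\|g_n\|_{\phi,\omega}\le 1$, obtaining $I_{\phi,\omega}(g_n)\le\|g_n\|_{\phi,\omega}$ and hence the quantitative bound $\mu(A_n)\le\|g_n\|_{\phi,\omega}/\phi(\varepsilon)$. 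You instead keep the norm inside the argument of $\phi$, use only $\int_\Omega\phi(|g_n|/\|g_n\|_{\phi,\omega})\,\omega\,d\mu\le 1$, and conclude $\mu(A_n)\le 1/\phi(\varepsilon/\|g_n\|_{\phi,\omega})\to 0$ from $\phi(x)\to\infty$. Your version buys a little robustness: with the one-sided modular bound (which, as you note, follows from the definition of the Luxemburg norm by monotone convergence) you need neither the $\Delta_2$ condition behind Theorem 3.1 nor the paper's preliminary reduction to $\|g_n\|_{\phi,\omega}\le 1$, and you also handle the indices with $g_n=0$ explicitly, which the paper's division by $\|f_n-f\|_{\phi,\omega}$ silently assumes away. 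The paper's version, in exchange, yields an explicit linear rate in $\|f_n-f\|_{\phi,\omega}$. Both arguments are sound.
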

\begin{proof}
Let $\{f_n\}$ be a sequence in $L^\phi_\omega(\Omega)$ such that
$f_n\rightarrow f$ in norm i.e., $||f_n- f||_{\phi,
\omega}\rightarrow 0$ as $n\rightarrow \infty$. Then $\exists \, n_0
\in \mathbb{N}$ such that $||f_n- f||_{\phi, \omega}\leq 1$ for all
$n\geq n_0$. For a given $\epsilon >0$, let us consider the set $E=
\{ x\in \Omega : |f_n(x)-f(x)|\geq \epsilon\}$. Now we have,
\begin{eqnarray*}
\int_\Omega \phi(|f_n(x)-f(x)|)\omega d\mu & = & \int_E
\phi(|f_n(x)-f(x)|)\omega d\mu + \int_{E^c}
\phi(|f_n(x)-f(x)|)\omega d\mu\\
&\geq & \phi(\epsilon) \int_E \omega d\mu\\
& = & \phi(\epsilon) \mu(\mu(\tau^{-1}(E))\\
& \geq & \phi(\epsilon ) \mu(E)
\end{eqnarray*}
This implies that
\begin{eqnarray}
\mu(E) \leq \frac{1}{\phi(\epsilon)}\displaystyle{\int_\Omega
\phi(|f_n(x)-f(x)|)\omega d\mu}.\label{eq:3}
\end{eqnarray}
 Also for $n\geq n_0$,
\begin{eqnarray*}
\frac{1}{||f_n- f||_{\phi, \omega}} \int_\Omega
\phi(|f_n(x)-f(x)|)\omega d\mu  \leq \int_\Omega
\phi\left(\frac{|f_n(x)-f(x)|}{||f_n- f||_{\phi,
\omega}}\right)\omega d\mu \leq 1
\end{eqnarray*}
This shows that
\begin{eqnarray}
\displaystyle {\int_\Omega \phi(|f_n(x)-f(x)|)\omega d\mu \leq
||f_n- f||_{\phi, \omega}}\,\,\,\, \mbox{for all}\,\, n \geq
n_0.\label{eq:4}
\end{eqnarray}
 Thus from (\ref{eq:3}) and (\ref{eq:4}) we have, $$ \mu(E) \leq
\frac{1}{\phi(\epsilon)} ||f_n- f||_{\phi, \omega} \,\,\,\,\mbox{for
\,\,all} \,\,n \geq n_0.$$ This shows that $\mu(E)\rightarrow 0$ as
$n\rightarrow \infty$. Therefore, convergence in norm implies
convergence in measure also.
\end{proof}

\begin{theorem}
Assume that $\mu\left(\tau^{-1}(E)\right) \geq \mu(E)$ for every
$E\in \Sigma$. Then every linear transformation $M_u :
L^\phi_\omega(\Omega) \rightarrow L^\phi_\omega(\Omega) $ is always
bounded.
\end{theorem}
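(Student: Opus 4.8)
The plan is to deduce boundedness from the Closed Graph Theorem rather than from any explicit estimate on $u$: the content of the statement is that mere well-definedness of $M_u$ on the Banach space $L^\phi_\omega(\Omega)$ already forces continuity. Since $L^\phi_\omega(\Omega)$ is a Banach space and $M_u$ is, by hypothesis, a linear map of $L^\phi_\omega(\Omega)$ into itself, it suffices to prove that the graph of $M_u$ is closed; that is, whenever $f_n\to f$ and $M_uf_n\to g$ in the norm $\|\cdot\|_{\phi,\omega}$, we must have $g=u\cdot f=M_uf$.

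So I would start from $\|f_n-f\|_{\phi,\omega}\to 0$ and $\|uf_n-g\|_{\phi,\omega}\to 0$. The key input is the preceding Lemma, whose hypothesis $\mu(\tau^{-1}(E))\ge\mu(E)$ for all $E\in\Sigma$ is precisely the one we are assuming: it gives that norm convergence in $L^\phi_\omega(\Omega)$ implies convergence in measure $\mu$. Applying it to the sequences $f_n-f$ and $uf_n-g$ yields $f_n\to f$ in measure and $uf_n\to g$ in measure.

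Next I would pass to subsequences to convert convergence in measure into pointwise convergence. Writing $\Omega$ as a countable increasing union of sets of finite measure (using $\sigma$-finiteness), extracting an a.e.\ convergent subsequence on each piece, and diagonalizing, from $f_n\to f$ in measure one obtains a subsequence $f_{n_k}\to f$ $\mu$-a.e.\ on $\Omega$. Since $u:\Omega\to\mathbb{C}$ is measurable, it is finite $\mu$-a.e., so $uf_{n_k}\to uf$ $\mu$-a.e. On the other hand $uf_{n_k}\to g$ in measure as well, so a further subsequence $uf_{n_{k_j}}\to g$ $\mu$-a.e. Comparing the two almost-everywhere limits of the same subsequence forces $g=uf$ $\mu$-a.e., i.e. $g=M_uf$.

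This shows the graph of $M_u$ is closed, and the Closed Graph Theorem then gives that $M_u:L^\phi_\omega(\Omega)\to L^\phi_\omega(\Omega)$ is bounded. The only point requiring a little care is the passage from convergence in measure to an a.e.\ convergent subsequence on a measure space that is only $\sigma$-finite (and possibly of infinite total mass), which is exactly what the exhaustion-and-diagonalization step above handles; the rest is routine, and the hypothesis on $\tau$ enters solely through its use in the Lemma.
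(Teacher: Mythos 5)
Your proposal is correct and follows essentially the same route as the paper: invoke the preceding lemma to convert norm convergence into convergence in measure, pass to a.e.\ convergent subsequences to identify $g=uf$, and conclude via the Closed Graph Theorem. The only difference is that you spell out the exhaustion-and-diagonalization step needed to extract an a.e.\ convergent subsequence on a $\sigma$-finite space, which the paper leaves implicit.
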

\begin{proof}
Let $\{f_n\}$ be a sequence in $L^\phi_\omega(\Omega)$ such that
$f_n\rightarrow f$ and $\{M_uf_n\}$ converges to some $g\in
L^\phi_\omega(\Omega)$. By the previous corollary, as the
convergence in norms implies convergence in measure hence $f_n
\rightarrow f$ in $\mu$. Therefore, we can find a subsequence
$\{f_{n'}\}$ of $\{f_n\}$ such that
\begin{eqnarray*}
\displaystyle{\lim_{n\rightarrow \infty } f_{n'}(x)}  =  f(x)
\,\,\,\mbox{a.e.}
\end{eqnarray*}
Therefore,
\begin{eqnarray}
 \displaystyle{ \lim_{n\rightarrow \infty }u(x)
f_{n'}(x)}  =  u(x)f(x) \,\,\,\mbox{a.e.}\label{eq:1}
\end{eqnarray}
Also, $uf_n \rightarrow g$ in measure and hence $uf_{n'} \rightarrow
g$ in measure also. Therefore we can find a subsequence
$\{f_{n''}\}$ of $\{f_{n'}\}$ such that
\begin{eqnarray}
\displaystyle{ \lim_{n\rightarrow \infty }u(x) f_{n''}(x)}= g
\,\,\mbox{a.e.}\label{eq:2}
\end{eqnarray}
Hence from (\ref{eq:1}) and (\ref{eq:2}) we can conclude that
$u\cdot f = g $ a.e. Thus by closed graph theorem, $M_u$ is bounded.
\end{proof}
\begin{theorem}
The multiplication operator $M_u$ is bounded on weighted Orlicz
spaces $L^\phi_\omega(\Omega)$ if and only if $u \in
L^{\infty}(\mu)$. Moreover, $||M_u||= ||u||_\infty$.
\end{theorem}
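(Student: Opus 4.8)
The statement is the weighted analogue of the classical identity $\|M_u\|_{L^p\to L^p}=\|u\|_\infty$, and the plan is to prove the two implications separately, tracking constants so that the norm identity falls out at the end.

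First, suppose $u\in L^\infty(\mu)$ and set $c:=\|u\|_\infty$. If $c=0$ then $u=0$ $\mu$-a.e., so $M_u=0$ and there is nothing to prove; assume $c>0$. Fix a nonzero $f\in L^\phi_\omega(\Omega)$ and any $k>\|f\|_{\phi,\omega}$. Since $s\mapsto\int_\Omega\phi(|f|/s)\,\omega\,d\mu$ is nonincreasing we have $\int_\Omega\phi(|f|/k)\,\omega\,d\mu\le 1$, and since $|u|\le c$ $\mu$-a.e. (hence $\omega\,d\mu$-a.e.) and $\phi$ is nondecreasing,
$$\int_\Omega\phi\!\left(\frac{|u f|}{c\,k}\right)\omega\,d\mu\ \le\ \int_\Omega\phi\!\left(\frac{|f|}{k}\right)\omega\,d\mu\ \le\ 1.$$
By the definition of the Luxemburg norm this gives $\|M_uf\|_{\phi,\omega}\le c\,k$, and letting $k\downarrow\|f\|_{\phi,\omega}$ yields $\|M_uf\|_{\phi,\omega}\le\|u\|_\infty\,\|f\|_{\phi,\omega}$. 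Hence $M_u$ maps $L^\phi_\omega(\Omega)$ into itself, is bounded, and $\|M_u\|\le\|u\|_\infty$. (When $\phi\in\Delta_2$ one may invoke Theorem~3.1 in place of the monotonicity step.)

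For the converse, assume $M_u$ is bounded and, arguing by contradiction, suppose $\|u\|_\infty>\|M_u\|$ — this also covers the case $u\notin L^\infty(\mu)$, read with $\|u\|_\infty=\infty$. Pick $\epsilon>0$ such that $\mu(E)>0$, where $E:=\{x\in\Omega:\,|u(x)|\ge\|M_u\|+\epsilon\}$. By $\sigma$-finiteness of $\mu$ together with local integrability of $\omega$, choose $F\subseteq E$ with $0<\mu(F)<\infty$ and $0<\mu(\tau^{-1}(F))<\infty$; then $\chi_F\in L^\phi_\omega(\Omega)$ and, by Corollary~3.2, $0<\|\chi_F\|_{\phi,\omega}=\big(\phi^{-1}(1/\mu(\tau^{-1}(F)))\big)^{-1}<\infty$. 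Because $|u\chi_F|\ge(\|M_u\|+\epsilon)\,\chi_F$ pointwise, the monotonicity and positive homogeneity of the Luxemburg norm give
$$\|M_u\chi_F\|_{\phi,\omega}=\|u\chi_F\|_{\phi,\omega}\ \ge\ (\|M_u\|+\epsilon)\,\|\chi_F\|_{\phi,\omega},$$
contradicting $\|M_u\chi_F\|_{\phi,\omega}\le\|M_u\|\,\|\chi_F\|_{\phi,\omega}$. Therefore $\|u\|_\infty\le\|M_u\|$, so in particular $u\in L^\infty(\mu)$; combining with the first part, $\|M_u\|=\|u\|_\infty$.

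The only delicate point is the selection of the test set $F$ in the converse: one needs $F\subseteq E$ with $\mu(F)$ positive and finite and with $\tau^{-1}(F)$ also of positive, finite $\mu$-measure, so that $\chi_F$ is a genuine nonzero element of $L^\phi_\omega(\Omega)$ on which Corollary~3.2 can be used. The $\sigma$-finiteness of $\mu$ and the local integrability of $\omega$ supply the finiteness; positivity of $\mu(\tau^{-1}(F))$ is automatic under the standing hypothesis $\mu(\tau^{-1}(E))\ge\mu(E)$ employed earlier in this section (or, more generally, whenever $\omega>0$ $\mu$-a.e.), and some such condition is genuinely needed, since otherwise $M_u$ only "sees" the restriction of $u$ to the set $\{\omega>0\}$. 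Everything else is the routine monotonicity/homogeneity bookkeeping for Luxemburg norms.
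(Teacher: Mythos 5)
Your proof is correct and follows the same overall strategy as the paper (bound $\|M_uf\|_{\phi,\omega}$ by $\|u\|_\infty\|f\|_{\phi,\omega}$ directly, then test $M_u$ on characteristic functions of level sets of $u$), but the tools differ in two ways worth noting. For the upper bound the paper plugs in $k=\|f\|_{\phi,\omega}$ and uses the identity $\int_\Omega\phi(|f|/\|f\|_{\phi,\omega})\,\omega\,d\mu=1$ from its Theorem 3.1, which requires $\phi\in\Delta_2$; your approximation with $k>\|f\|_{\phi,\omega}$ and monotonicity of the modular avoids $\Delta_2$ entirely, which is a genuine (if small) gain since the theorem as stated carries no $\Delta_2$ hypothesis. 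For the converse the paper runs two separate arguments --- first $E_n=\{|u|>n\}$ to show $u\in L^\infty(\mu)$, then $E=\{|u|\ge\|u\|_\infty-\delta\}$ to get $\|M_u\|\ge\|u\|_\infty-\delta$ --- each time invoking the modular identity of Theorem 3.1/Corollary 3.2 to extract the lower bound; you merge both into one contradiction using only the lattice monotonicity and homogeneity of the Luxemburg norm ($|g|\le|h|$ a.e.\ implies $\|g\|_{\phi,\omega}\le\|h\|_{\phi,\omega}$), which is cleaner and again independent of $\Delta_2$. The one delicate point you flag --- that one needs $\mu(\tau^{-1}(F))>0$, equivalently $\int_F\omega\,d\mu>0$, for $\chi_F$ to be a nonzero test vector --- is a real issue, but it is present in exactly the same implicit form in the paper's own proof (which applies Theorem 3.1 to $u\cdot\chi_{E_n}$ and Corollary 3.2 to $\chi_{E_n}$ without checking this); your explicit acknowledgment that some hypothesis such as $\mu(\tau^{-1}(E))\ge\mu(E)$ or $\omega>0$ a.e.\ is needed, since otherwise $M_u$ only sees $u$ on $\{\omega>0\}$, is a point the paper glosses over. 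No gaps beyond that shared caveat.
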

\begin{proof}
Let us suppose that $u\in L^{\infty}(\mu)$. Then we have,
$$\int_\Omega \phi \left(\frac{|M_u f|}{||u||_\infty \cdot
||f||_{\phi,\omega}} \right)\omega d\mu \leq \int_\Omega
\phi\left(\frac{|f|}{||f||_{\phi, \omega}} \right)\omega d\mu = 1$$
This shows that $$I_{\phi,\omega}\left(\frac{|M_u f|}{||u||_\infty
\cdot ||f||_{\phi,\omega}} \right) \leq 1$$ This implies that
$$||M_u(f)||_{\phi,\omega} \leq ||u||_\infty\cdot ||f||_{\phi,\omega}.$$
Conversely, let $M_u$ be a bounded operator on
$L^\phi_\omega(\Omega)$. To show $u\in L^{\infty}(\mu)$. Suppose
not, then for every $n\in\mathbb{N}$, the set $E_n = \{x\in \Omega
:|u(x)|>n\}$ has a positive measure with respect to $\mu$. As $\mu$
is $\sigma$-finite, hence we can assume that $\mu(E_n) <\infty$.
Take $f= \chi_{E_n}$. Then $f\in L^\phi_\omega(\Omega)$ and by the
result that $\displaystyle{\int_\Omega
\phi\left(\frac{|f|}{||f||_{\phi , \omega}}\right)\omega(t) d\mu(t)=
1}$ for all nonzero $f\in L^\phi_\omega(\Omega)$, we have
\begin{align*}
&
\displaystyle{\int_{E_n}\phi\left(\frac{n}{||u\cdot\chi_{E_n}||_{\phi,\omega}}
\right)\omega d\mu \leq \int_\Omega\phi\left(\frac{|u\cdot
\chi_{E_n}|}{||u\cdot\chi_{E_n}||_{\phi,\omega}} \right)\omega d\mu
=1 }\\
\Rightarrow & \,
\phi\left(\frac{n}{||u\cdot\chi_{E_n}||_{\phi,\omega}}
\right)\int_{E_n} d\mu\circ \tau^{-1} \leq 1\\
\Rightarrow &
\,\phi\left(\frac{n}{||u\cdot\chi_{E_n}||_{\phi,\omega}} \right)
\leq \frac{1}{\mu(\tau^{-1}(E_n))}\\
\Rightarrow & \, n\cdot
\frac{1}{\phi^{-1}\left(\frac{1}{\mu(\tau^{-1}(E_n))}\right)} \leq
||u\cdot\chi_{E_n}||_{\phi,\omega}\\
\Rightarrow & \, n\cdot ||\chi_{E_n}||_{\phi,\omega} \leq ||M_u
\chi_{E_n}||_{\phi,\omega}\\
\Rightarrow & \, ||M_u|| \geq n
\end{align*}
for all $n\in \mathbb{N}$. This contradicts to the boudedness of
$M_u$. Hence $u \in L^{\infty}(\mu)$.\\
Next to show $||M_u||_{\phi,\omega}= ||u||_\infty$. Let $\delta >0$,
then the set $ E= \{ x \in \Omega: |u(x)|\geq ||u||_\infty -\delta
\} $ has a positive measure. Then we have,
$$\displaystyle{\int_\Omega \phi\left( \frac{||u||_\infty -\delta}{||M_u\chi_E||_{\phi,\omega}}\chi_E \right)\omega d\mu
\leq \int_\Omega \phi\left( \frac{|u\chi_E|}{||M_u\chi_E||_{\phi,\omega}}\right)\omega d\mu = 1} $$
Hence $||\chi_E||_{\phi,\omega}\leq \frac{||M_u \chi_E||_{\phi,\omega}}{||u||_\infty- \delta}$. This implies that
$||M_u|| \geq ||u||_\infty - \delta$. Since this is true for every $\delta>0$, hence $||M_u||\geq ||u||_\infty$ and hence the result.
\end{proof}
\begin{theorem}
 Assume that $\mu(\Omega)<\infty$ and Orlicz function $\phi$ satisfy $\Delta_2$ for all $x$. Then the set of all multiplication operators on $L^\phi_\omega(\Omega)$ forms  a maximal abelian subalgebra of
 $B\left(L^\phi_\omega(\Omega) \right)$.
\end{theorem}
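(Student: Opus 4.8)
The plan is to identify the set of all multiplication operators on $L^\phi_\omega(\Omega)$ with $\mathcal{A}:=\{M_u:u\in L^\infty(\mu)\}$, using the characterization theorem just proved (a multiplication operator $M_u$ is bounded precisely when $u\in L^\infty(\mu)$), and then to establish in turn that $\mathcal{A}$ is a unital abelian subalgebra of $B(L^\phi_\omega(\Omega))$ and that it is maximal among abelian subalgebras. The subalgebra part is routine: from $M_u+M_v=M_{u+v}$, $\lambda M_u=M_{\lambda u}$, $M_uM_v=M_{uv}=M_vM_u$, and $M_{\chi_\Omega}=I$ (here $\chi_\Omega\in L^\infty(\mu)$ because $\mu(\Omega)<\infty$), it follows immediately that $\mathcal{A}$ is closed under the algebra operations, contains the identity, and is commutative.

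For maximality I would take an arbitrary $T\in B(L^\phi_\omega(\Omega))$ that commutes with every $M_u$, $u\in L^\infty(\mu)$, and try to exhibit a $w\in L^\infty(\mu)$ with $T=M_w$. Since $\mu(\Omega)<\infty$ forces $\chi_\Omega\in L^\phi_\omega(\Omega)$, set $w:=T\chi_\Omega$. For each $A\in\Sigma$ one has $\chi_A\in L^\infty(\mu)$ and $\chi_A=M_{\chi_A}\chi_\Omega$, so commutativity yields $T\chi_A=M_{\chi_A}T\chi_\Omega=\chi_A\,w$; by linearity of $T$ this gives $Ts=ws$ for every simple function $s$.

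The next step is to check that $w$ is actually essentially bounded, which I would do by the same truncation argument used in the characterization theorem above: if $w\notin L^\infty(\mu)$, then for every $n$ the set $E_n=\{x\in\Omega:|w(x)|>n\}$ has positive measure, $\chi_{E_n}$ is a nonzero element of $L^\phi_\omega(\Omega)$, and from $T\chi_{E_n}=w\chi_{E_n}$ together with Theorem~3.1 and the corollary $\|\chi_F\|_{\phi,\omega}=1/\phi^{-1}(1/\mu(\tau^{-1}(F)))$ one deduces $\|T\chi_{E_n}\|_{\phi,\omega}\ge n\,\|\chi_{E_n}\|_{\phi,\omega}$, whence $\|T\|\ge n$ for all $n$, contradicting the boundedness of $T$. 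Hence $M_w\in\mathcal{A}$. Finally, because $\phi$ satisfies the $\Delta_2$-condition for all $x$, simple functions are dense in $L^\phi_\omega(\Omega)$ (the argument parallels the unweighted case, cf.\ \cite{Ames94}), and the bounded operators $T$ and $M_w$ agree on this dense set, so $T=M_w\in\mathcal{A}$; combined with the first stage this proves $\mathcal{A}$ is maximal abelian.

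I expect the maximality step to be the main obstacle, and within it the delicate point is guaranteeing that $w=T\chi_\Omega$ is genuinely in $L^\infty(\mu)$ (so that $M_w$ is even a bounded operator) rather than merely measurable, together with making rigorous the passage from ``$T$ and $M_w$ agree on simple functions'' to ``$T=M_w$ on all of $L^\phi_\omega(\Omega)$''. The hypothesis $\mu(\Omega)<\infty$ is precisely what lets the argument start from the cyclic-type vector $\chi_\Omega$, while the $\Delta_2$ hypothesis is what makes the simple functions dense so that the identity $Ts=M_w s$ extends by continuity.
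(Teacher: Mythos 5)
Your proposal is correct and follows essentially the same route as the paper's proof: the paper also takes $v=T(e)$ with $e\equiv 1$ (your $w=T\chi_\Omega$), shows $T\chi_E=\chi_E\cdot v$ via commutativity with $M_{\chi_E}$, rules out $v\notin L^\infty$ by the same truncation argument on $\{|v|>k\}$, and concludes by density of simple functions. The only difference is that you spell out the routine subalgebra verification and the identification of multiplication operators with $\{M_u:u\in L^\infty\}$ slightly more explicitly.
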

\begin{proof}
 Let $m = \{ M_u: u\in L^\infty\}$. Then obviously $m$ is an abelian subalgebra of $B\left(L^\phi_\omega(\Omega) \right)$. For showing $m$ is maximal it is enough to show that if the bounded operator $T$ commutes with every element of $m$, then $T\in m$. Let $e:\Omega \rightarrow \mathbb{C}$ be a function
 identically equal to unity. Then $e \in L^\phi_\omega(\Omega)$. Take $v=T(e)$. Then for $E\in \Sigma $ we have,
 $$T(\chi_E) = T(M_{\chi_E}(e))= M_{\chi_E}(T(e))=M_{\chi_E}(v)= \chi_E \cdot v = M_v(\chi_E).$$
 Now claim is that $v\in L^\infty$. If not then the set $F=\{ x\in \Omega: |v(x)|>k\}$ has positive measure for every $k\in \mathbb{N}$. Hence we have,
 $$\displaystyle{\int_\Omega \phi\left(\frac{|k\chi_F|}{||T(\chi_F)||_{\phi,\omega}} \right)\omega d\mu \leq
 \int_\Omega \phi\left(\frac{|v\chi_F|}{||T(\chi_F)||_{\phi,\omega}} \right)\omega d\mu =
 \int_\Omega \phi\left(\frac{|M_v(\chi_F)|}{||M_v(\chi_F)||_{\phi,\omega}} \right)\omega d\mu=1}$$
 This implies that
 $$||T(\chi_F)||_{\phi,\omega}\geq k ||\chi_F||_{\phi,\omega}$$ for all $k\in \mathbb{N}$, which contradicts to the boundedness of the operator $T$.
 Hence $v$ must be in $L^\infty$. Now as $T$ and $M_v$ are agree on simple functions and simple function are dense in $L^\phi_\omega(\Omega)$, hence we must have
 $T=M_v$. Therefore $m$ is an maximal abelian subalgebra of $ B\left(L^\phi_\omega(\Omega) \right)$.
\end{proof}
\begin{corollary}
 If $\mu(\Omega)<\infty$ and Orlicz function $\phi$ satisfy $\Delta_2$ for all $x$, then the multiplication operator $M_u$ is invertible if and only if $u$ is
 invertible in $L^\infty$.
\end{corollary}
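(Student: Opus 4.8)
The statement sits directly after the maximal-abelian-subalgebra theorem, and the plan is to deduce it from that theorem together with the boundedness criterion ($M_u$ is bounded on $L^\phi_\omega(\Omega)$ if and only if $u\in L^\infty(\mu)$) proved above. I will treat the two implications separately.

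For the implication ``$u$ invertible in $L^\infty$ $\Rightarrow$ $M_u$ invertible'': write $v=1/u$, so that $u,v\in L^\infty(\mu)$. By the boundedness criterion, both $M_u$ and $M_v$ are bounded operators on $L^\phi_\omega(\Omega)$. Since pointwise multiplication of scalar functions is commutative and $uv=1$, we get $M_uM_v=M_{uv}=M_1=\mathrm{Id}=M_vM_u$. Hence $M_u$ is invertible, with $M_u^{-1}=M_v$.

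For the converse, assume $M_u$ is invertible on $L^\phi_\omega(\Omega)$. In particular $M_u$ is bounded, so $u\in L^\infty(\mu)$ by the boundedness criterion; it remains to show $1/u\in L^\infty(\mu)$. Here I would invoke the preceding theorem: under the present hypotheses $\mu(\Omega)<\infty$ and $\phi\in\Delta_2$, the set $m=\{M_w:w\in L^\infty(\mu)\}$ is a \emph{maximal} abelian subalgebra of $B(L^\phi_\omega(\Omega))$, and $M_u\in m$. Since $m$ is commutative, $M_u$ commutes with every $S\in m$; conjugating the identity $M_uS=SM_u$ by the bounded operator $M_u^{-1}$ gives $SM_u^{-1}=M_u^{-1}S$, i.e.\ $M_u^{-1}$ commutes with every element of $m$. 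By maximality of $m$ this forces $M_u^{-1}\in m$, say $M_u^{-1}=M_v$ with $v\in L^\infty(\mu)$. Then $M_{uv}=M_uM_v=\mathrm{Id}=M_1$; evaluating both sides on $\chi_\Omega\in L^\phi_\omega(\Omega)$ (which lies in the space because $\mu(\Omega)<\infty$) yields $uv=1$ a.e., so $1/u=v\in L^\infty(\mu)$, i.e.\ $u$ is invertible in $L^\infty$.

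The easy direction is routine; the only delicate point in the converse is justifying that $M_u^{-1}$ itself belongs to $m$, which is exactly the step that uses maximality and hence the standing assumptions $\mu(\Omega)<\infty$, $\phi\in\Delta_2$ (through the preceding theorem). If one prefers a self-contained argument avoiding the algebra structure, one can instead use that invertibility of $M_u$ gives a uniform lower bound $\|M_uf\|_{\phi,\omega}\ge c\,\|f\|_{\phi,\omega}$ with $c=\|M_u^{-1}\|^{-1}>0$: assuming $1/u\notin L^\infty(\mu)$, the sets $E_n=\{x:|u(x)|<1/n\}$ have $0<\mu(E_n)<\infty$, and testing on $\chi_{E_n}$ (a nonzero element of finite positive norm, computed by the corollary on $\|\chi_F\|_{\phi,\omega}$) gives, via the pointwise bound $|u\chi_{E_n}|\le \tfrac1n\chi_{E_n}$ and monotonicity of the Luxemburg norm, $c\,\|\chi_{E_n}\|_{\phi,\omega}\le\|M_u\chi_{E_n}\|_{\phi,\omega}\le\tfrac1n\|\chi_{E_n}\|_{\phi,\omega}$, whence $c\le 1/n$ for all $n$ --- a contradiction.
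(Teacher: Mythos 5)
Your main argument is essentially the paper's own proof: the paper likewise shows that $T=M_u^{-1}$ commutes with every $M_v\in m$ (by a direct computation rather than your conjugation of $M_uS=SM_u$, but this is the same idea) and then invokes maximality of $m$ to conclude $T=M_v$ with $v=1/u\in L^\infty$, while dismissing the forward direction as obvious. Your closing sketch of a self-contained alternative via the lower bound $\|M_uf\|_{\phi,\omega}\ge c\|f\|_{\phi,\omega}$ tested on $\chi_{E_n}$ is a valid and more elementary route not taken by the paper, but your primary proof coincides with theirs.
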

\begin{proof}
 Let $T$ is the inverse of $M_u$. Then $TM_u= M_u T= I$, where $I$ is the identity operator. Again for $M_v\in m$ we have,
 $T(M_v(f)) = T(v\cdot f)= T(v\cdot u\cdot T(f)) = T(M_u(v\cdot T(f))) =v\cdot T(f)= M_v(T(f))$. This shows that $T$ commutes with every element of $m$. As $m$ is
 an maximal abelian subalgebra, hence $T=M_v$ for some $v\in L^\infty$ and it can be shown that $v$ is the inverse  of $u$ in $L^\infty$.\\
 The other part is obvious.
\end{proof}
\begin{theorem}
 Let $\mu(\Omega)<\infty$ and $M_u\in B\left(L^\phi_\omega(\Omega) \right)$. Then $M_u$ is compact operator if and only if
 $L^\phi_\omega\left(N(u,\epsilon)\right)$ is finite dimensional for every $\epsilon >0$, where $ L^\phi_\omega\left(N(u,\epsilon)\right)
 = \{ f\in L^\phi_\omega(\Omega): f(x)=0; \,\,\forall\,\, x \notin \,\, N(u,\epsilon)\}$.
\end{theorem}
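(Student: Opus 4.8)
The plan is to mirror the proof of Theorem 1.1, adapting both implications to the weighted space. Write $N_\epsilon:=N(u,\epsilon)$ and $Y_\epsilon:=L^\phi_\omega(N_\epsilon)$, and note first that since $M_u\in B\left(L^\phi_\omega(\Omega)\right)$, Theorem 3.5 gives $u\in L^\infty(\mu)$.

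For the implication that $Y_\epsilon$ being finite dimensional for every $\epsilon>0$ forces $M_u$ to be compact, I would approximate $M_u$ by finite rank operators. For each $n$ put $u_n:=u\,\chi_{N_{1/n}}$. Since $|u_n|\le|u|\in L^\infty(\mu)$, the operator $M_{u_n}$ is bounded, and its range lies in $Y_{1/n}$, which is finite dimensional; hence $M_{u_n}$ has finite rank and is compact. Moreover $M_u-M_{u_n}=M_{u-u_n}$ with $u-u_n=u\,\chi_{\{|u|<1/n\}}$, so Theorem 3.5 yields $\|M_u-M_{u_n}\|=\|u-u_n\|_\infty\le 1/n\to 0$. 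Therefore $M_u$ is a norm limit of compact operators, hence compact.

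For the converse, assume $M_u$ is compact and suppose, for contradiction, that $Y_{\epsilon_0}$ is infinite dimensional for some $\epsilon_0>0$. I would first observe that $Y_{\epsilon_0}$ is a closed subspace of $L^\phi_\omega(\Omega)$, being the kernel of the bounded operator $M_{\chi_{\Omega\setminus N_{\epsilon_0}}}$; hence it is an infinite dimensional Banach space and $M_u$ restricts to a compact operator on it. Since $|u|\ge\epsilon_0$ on $N_{\epsilon_0}$ while every $f\in Y_{\epsilon_0}$ vanishes off $N_{\epsilon_0}$, the computation at the end of the proof of Theorem 2.1 (with $d\mu$ replaced by $\omega\,d\mu$) gives
$$\|M_uf\|_{\phi,\omega}\ \ge\ \epsilon_0\,\|f\|_{\phi,\omega}\qquad\text{for all } f\in Y_{\epsilon_0},$$
so $M_u|_{Y_{\epsilon_0}}$ is bounded below. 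But a compact operator cannot be bounded below on an infinite dimensional Banach space: by Riesz's lemma pick $\{f_n\}\subset Y_{\epsilon_0}$ with $\|f_n\|_{\phi,\omega}=1$ and $\|f_n-f_m\|_{\phi,\omega}\ge\frac12$ for $n\neq m$; then $\|M_uf_n-M_uf_m\|_{\phi,\omega}\ge\epsilon_0/2$, so $\{M_uf_n\}$ has no convergent subsequence, contradicting compactness of $M_u$. Hence $Y_{\epsilon_0}$ is finite dimensional, completing the proof.

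The two points that need genuine care, and which I regard as the main obstacle, are: (i) showing $Y_\epsilon$ is norm-closed, i.e. that the $\mu$-a.e. condition defining $Y_\epsilon$ agrees on the relevant sets with the $\omega\,d\mu$-a.e. condition defining $\ker M_{\chi_{\Omega\setminus N_\epsilon}}$ (which is immediate if $\omega>0$ a.e., and otherwise needs a short argument); and (ii) the displayed lower bound, which rests on the monotonicity of $\phi$, the scaling $k\mapsto k/\epsilon_0$ in the infimum defining $\|\cdot\|_{\phi,\omega}$, and the support condition on $f$. The hypothesis $\mu(\Omega)<\infty$ is used only to keep the weighted space and its characteristic functions well behaved; the rest is a direct transcription of the unweighted argument.
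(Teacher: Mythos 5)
Your proof is correct, and one half of it diverges from the paper's in an interesting way. The direction ``$Y_\epsilon$ finite dimensional for all $\epsilon>0$ implies $M_u$ compact'' is essentially identical to the paper's: both approximate $M_u$ by the finite rank operators $M_{u_n}$ with $u_n=u\chi_{N_{1/n}}$; the only difference is that you get the estimate $\|M_u-M_{u_n}\|\le 1/n$ by citing $\|M_v\|=\|v\|_\infty$ for $v=u\chi_{\{|u|<1/n\}}$, whereas the paper redoes the modular computation $\int_{\{|u|\le 1/n\}}\phi\bigl(n|uf|/\|f\|_{\phi,\omega}\bigr)\omega\,d\mu\le 1$ directly. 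For the converse, however, the paper argues quite differently: it notes that $Y_\epsilon$ is $M_u$-invariant, invokes the preceding corollary to conclude that $M_u|_{Y_\epsilon}$ is \emph{invertible} (since $|u|\ge\epsilon$ on $N(u,\epsilon)$), applies the open mapping theorem to see that the image of the open unit ball is open, and then uses the fact that no open set in an infinite dimensional normed space has compact closure. Your route --- the lower bound $\|M_uf\|_{\phi,\omega}\ge\epsilon_0\|f\|_{\phi,\omega}$ on $Y_{\epsilon_0}$ obtained from monotonicity of $\phi$ and the scaling of the Luxemburg infimum, followed by Riesz's lemma to produce a unit sequence whose images are $\epsilon_0/2$-separated --- is more elementary and arguably more robust: the paper's appeal to its invertibility corollary is somewhat loose, since that corollary is stated for $M_u$ on all of $L^\phi_\omega(\Omega)$ under the hypotheses $\mu(\Omega)<\infty$ and $\phi\in\Delta_2$, and transplanting it to the restricted space $Y_\epsilon$ requires treating $N(u,\epsilon)$ as a measure space in its own right, which the paper does not spell out. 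Your argument sidesteps all of that. One small remark: your worry (i) about norm-closedness of $Y_{\epsilon_0}$ is not actually needed --- Riesz's lemma and the separation argument only require $Y_{\epsilon_0}$ to be an infinite dimensional normed space, and compactness of $M_u$ on the ambient space already forces $\{M_uf_n\}$ to have a convergent subsequence for any bounded $\{f_n\}\subset Y_{\epsilon_0}$ --- so that point can be dropped without loss.
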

\begin{proof}
 Suppose $M_u$ is a compact operator on $L^\phi_\omega(\Omega)$. Since $L^\phi_\omega\left(N(u,\epsilon)\right)$ is invariant space under the multiplication
 operator $M_u$, hence $M_u\vert_{L^\phi_\omega\left(N(u,\epsilon)\right)}:L^\phi_\omega\left(N(u,\epsilon)\right) \rightarrow L^\phi_\omega\left(N(u,\epsilon)\right)$ is a compact
 operator as it is a restriction of the compact operator $M_u$. Now on $N(u,\epsilon)$, $|u(x)| \geq \epsilon$, where $\epsilon >0$. So inverse of $u$ is exists and by the
 previous corollary the operator $M_u\vert_{L^\phi_\omega\left(N(u,\epsilon)\right)}$ is invertible. This implies that the map $M_u\vert_{L^\phi_\omega\left(N(u,\epsilon)\right)}$ is onto. So
 by open mapping theorem, the map $M_u\vert_{L^\phi_\omega\left(N(u,\epsilon)\right)}$ is open. Hence the image of the open unit ball $B_1\left(L^\phi_\omega\left(N(u,\epsilon)\right)\right)$ is open in
$ L^\phi_\omega\left(N(u,\epsilon)\right)$ under the map
$M_u\vert_{L^\phi_\omega\left(N(u,\epsilon)\right)}$. It is known
that for an infinite dimensional space $X$, there is no open subsets
of $X$ which has a compact closure. Thus if
$L^\phi_\omega\left(N(u,\epsilon)\right)$ is infinite dimensional,
then the image
$M_u\vert_{L^\phi_\omega\left(N(u,\epsilon)\right)}\left(B_1\left(L^\phi_\omega\left(N(u,\epsilon)\right)\right)
\right)$ has no compact closure, which contradicts to fact that
$M_u\vert_{L^\phi_\omega\left(N(u,\epsilon)\right)}$ is compact.
Therefore, the space $L^\phi_\omega\left(N(u,\epsilon)\right)$ must
be
finite dimensional for every $\epsilon >0$.\\
Conversely, suppose that
$L^\phi_\omega\left(N(u,\frac{1}{n})\right)$ is finite dimensional
for every $n\in \mathbb{N}$. As $M_u$ is bounded, hence $u\in
L^\infty$. Now for every $n\in \mathbb{N}$,  define $u_n : \Omega
\rightarrow \mathbb{R/C}$ by
$$u_n(x)=\begin{cases} u(x)\,;~~~
\mbox{if}~~ x \in u\left(\Omega, \frac{1}{n}\right)\\
0\,;~~~~~~~\,\,\,\,\,\,\,\,\mbox{otherwise}
\end{cases}$$
where $u\left(\Omega, \frac{1}{n}\right) = \{x \in \Omega: |u(x)| >\frac{1}{n}\}$. Then we have
\begin{eqnarray*}
 \int_\Omega \phi\left(\frac{n |(M_{u_n}-M_u)f|}{||f||_{\phi,\omega}} \right)\omega d\mu
 & = & \int_ {u\left(\Omega, \frac{1}{n}\right)^c}\phi\left(\frac{n |u\cdot f|}{||f||_{\phi,\omega}} \right)\omega d\mu \\
 & \leq & \int_ {u\left(\Omega, \frac{1}{n}\right)^c}\phi\left(\frac{|f|}{||f||_{\phi,\omega}} \right)\omega d\mu \\
 & \leq & \int_ {\Omega}\phi\left(\frac{|f|}{||f||_{\phi,\omega}} \right)\omega d\mu \leq 1
\end{eqnarray*}
This shows that $||(M_{u_n}-M_u)f||_{\phi,\omega} \leq \frac{1}{n} ||f||_{\phi,\omega}$ and hence $M_{u_n}\rightarrow M_u$ when $n \rightarrow \infty$.
 But range$(M_{u_n}) =  L^\phi_\omega\left(N(u,\frac{1}{n})\right)$, which is finite dimensional. Therefore, $M_{u_n}$ are all finite rank operator and hence
 compact operator for every $n\in\mathbb{N}$. Since $M_u$ is a limit of  a sequence of finite rank compact operators, hence $M_u$
 is compact.
\end{proof}

\begin{corollary}
If the measure $\mu$ is non-atomic, then the only compact
multiplication operator from $L^\phi_\omega$ into itself is the zero
operator.
\end{corollary}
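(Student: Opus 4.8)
The plan is to deduce this directly from the preceding compactness criterion, which (under the standing assumptions $\mu(\Omega)<\infty$ and $M_u$ bounded) says that $M_u$ is compact if and only if $L^\phi_\omega\big(N(u,\epsilon)\big)$ is finite dimensional for every $\epsilon>0$. So I would argue by contraposition: assume $M_u$ is compact but $u$ is not the zero element of $L^\infty(\mu)$, and produce a contradiction by exhibiting an infinite dimensional subspace of $L^\phi_\omega\big(N(u,\epsilon_0)\big)$ for a suitable $\epsilon_0$.

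First, since $u\neq 0$ there is $\epsilon_0>0$ with $\mu\big(N(u,\epsilon_0)\big)>0$; write $E_0=N(u,\epsilon_0)$. Because $\mu$ is non-atomic, $E_0$ contains no atoms, so by the construction recalled in the preliminaries one obtains a strictly decreasing chain $E_0=A_1\supset A_2\supset A_3\supset\cdots$ with $\mu(A_1)>\mu(A_2)>\cdots>0$. Put $F_n=A_n\setminus A_{n+1}$; then the $F_n$ are pairwise disjoint measurable subsets of $E_0$, each with $\mu(F_n)=\mu(A_n)-\mu(A_{n+1})>0$.

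Next I would check that $\{\chi_{F_n}\}_{n\geq1}$ is an infinite linearly independent family inside $L^\phi_\omega\big(N(u,\epsilon_0)\big)$. Linear independence is immediate from the disjointness of the supports, provided each $\chi_{F_n}$ is a nonzero element of $L^\phi_\omega(\Omega)$. Here the weight must be taken into account: by the corollary computing the norm of a characteristic function, $\|\chi_{F_n}\|_{\phi,\omega}=1/\phi^{-1}\big(1/\mu(\tau^{-1}(F_n))\big)$, so nonvanishing amounts to $\mu\big(\tau^{-1}(F_n)\big)>0$. This is automatic when the weight $\omega$ is positive $\mu$-almost everywhere, and in particular under the hypothesis $\mu\big(\tau^{-1}(E)\big)\geq\mu(E)$ used earlier in this section, since then $\mu(F_n)>0$ forces $\mu\big(\tau^{-1}(F_n)\big)>0$. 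Granting this, $L^\phi_\omega\big(N(u,\epsilon_0)\big)$ contains infinitely many linearly independent vectors, hence is infinite dimensional.

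This contradicts the compactness criterion, which requires $L^\phi_\omega\big(N(u,\epsilon)\big)$ to be finite dimensional for every $\epsilon>0$. Therefore $u=0$ $\mu$-almost everywhere, i.e.\ $M_u=0$. The only delicate point --- and the place where the argument could need an extra hypothesis or a small refinement --- is ensuring that the disjoint pieces $F_n$ remain ``visible'' in the weighted space, i.e.\ $\mu\big(\tau^{-1}(F_n)\big)>0$; everything else is the familiar fact that a non-atomic measure space supports infinite dimensional Orlicz subspaces.
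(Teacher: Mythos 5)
Your argument is correct and is exactly the deduction the paper intends: the corollary is stated without proof as the immediate analogue of Corollary 1.2, following from the preceding theorem by exhibiting, via non-atomicity, infinitely many disjoint sets of positive measure inside $N(u,\epsilon_0)$ whose characteristic functions span an infinite dimensional subspace of $L^\phi_\omega\bigl(N(u,\epsilon_0)\bigr)$. Your flagging of the one genuinely delicate point --- that $\chi_{F_n}$ is a nonzero element of the weighted space only when $\mu\bigl(\tau^{-1}(F_n)\bigr)>0$, which the paper tacitly assumes (e.g.\ via $\omega>0$ a.e.\ or $\mu(\tau^{-1}(E))\geq\mu(E)$) --- is a point the paper glosses over entirely.
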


\bibliographystyle{amsplain}

\end{document}